\newenvironment{customthm}[1]
  {\innercustomthm}
  {\endinnercustomthm}
\newenvironment{customlem}[1]
  {\innercustomlem}
  {\endinnercustomlem}
\newenvironment{customcor}[1]
  {\innercustomcor}
  {\endinnercustomcor}
\newtheorem*{thm*}{Theorem}
\newtheorem{thm}{Theorem}
\newtheorem{lem}[thm]{Lemma}
\newtheorem{pro}[thm]{Proposition}
\newtheorem{obs}[thm]{Observation}
\newtheorem{cor}[thm]{Corollary}
\newtheorem{ques}[thm]{Question}
\newcommand{\N}{\mathbb{N}}
\newcommand{\R}{\mathbb{R}}
\begin{document}

\title{Counting Packings of List-colorings of Graphs}

\author{Hemanshu Kaul\thanks{Department of Applied Mathematics, Illinois Institute of Technology, Chicago, IL 60616. E-mail: {\tt {kaul@iit.edu}}} \and Jeffrey A. Mudrock\thanks{Department of Mathematics and Statistics, University of South Alabama, Mobile, AL 36688.  E-mail:  {\tt {mudrock@southalabama.edu}}}}

\maketitle

\begin{abstract}
Given a list assignment for a graph, list packing asks for the existence of multiple pairwise disjoint list colorings of the graph. Several papers have recently appeared that study the existence of such a packing of list colorings. Formally, a proper $L$-packing of size $k$ of a graph $G$ is a set of $k$ pairwise disjoint proper $L$-colorings of $G$ where $L$ is a list assignment of colors to the vertices of $G$. In this note, we initiate the study of counting such packings of list colorings of a graph.
We define $P_\ell^\star(G,q,k)$ as the guaranteed number of proper $L$-packings of size $k$ of $G$ over all list assignments $L$ that assign $q$ colors to each vertex of $G$, and we let $P^\star(G,q,k)$ be its classical coloring counterpart. We let $P_\ell^\star(G,q)= P_\ell^\star(G,q,q)$ so that $P_\ell^\star(G,q)$ is the enumerative function for the previously studied list packing number $\chi_\ell^\star(G)$. Note that the chromatic polynomial of $G$, $P(G,q)$, is $P^\star(G,q,1)$, and the list color function of $G$, $P_\ell(G,q)$, is $P_\ell^\star(G,q,1)$.

Inspired by the well-known behavior of the list color function and the chromatic polynomial, 
we make progress towards the question of whether $P_{\ell}^\star(G,q,k) = P^\star(G,q,k)$ when $q$ is large enough. Our result generalizes the recent theorem of Dong and Zhang (2023), which improved results going back to Donner (1992), about when the list color function equals the chromatic polynomial. Further, we use a polynomial method to generalize bounds on the list packing number, $\chi_\ell^\star(G)$, of sparse graphs to exponential lower bounds (in the number of vertices of $G$) on the corresponding list packing functions, $P_\ell^\star(G,q)$.

\medskip

\noindent {\bf Keywords.}  graph coloring, list coloring, chromatic polynomial, list color function, list packing, list packing function

\noindent \textbf{Mathematics Subject Classification.} 05C15, 05C30, 05A99.

\end{abstract}

\section{Introduction}\label{intro}

List coloring is a fundamental topic in graph theory with a rich history since its introduction in 1970s~\cite{ET79, V76}. The basic problem studied under list coloring is the question of the existence of such a coloring, with Thomassen's 5-choosability of planar graphs~\cite{T94} being a prime example of such a result. In the case of planar graphs, the original setting for classical coloring problems, there are also the corresponding, well-studied questions (see the discussions in ~\cite{CC23, DK23}) of the existence of exponentially many list colorings in the number of vertices of the graph. Recently, another perspective on existence of many list colorings was given by Cambie, Batenburg, Davies, and Kang~\cite{CC21}. They asked for the existence of simultaneous pairwise disjoint list colorings of a graph which they called a packing of list colorings. A proper $L$-packing of size $k$ of $G$ is a set of $k$ pairwise disjoint proper $L$-colorings of $G$ where $L$ is a list assignment of colors to the vertices of $G$.

In this paper, we initiate the study of counting such packings of list colorings of a graph. We define the $(k,q)$-fold list packing function, $P_\ell^\star(G,q,k)$ as the guaranteed number of proper $L$-packings of size $k$ of $G$ over all list assignments $L$ that assign $q$ colors to each vertex of $G$. Similarly, the $(k,q)$-fold classical packing function, $P^\star(G,q,k)$ is the number of proper $L$-packings of size $k$ of $G$ for the constant list assignment $L$ that assigns $\{1, \ldots, q \}$ to each vertex. Various previously defined and studied parameters can now be defined in terms of these functions. The list packing number $\chi_\ell^\star(G)$ is the least $q$ such that $P_\ell^\star(G,q,q)>0$. In fact, the case $k=q$ is of special interest, and we define $P_\ell^\star(G,q)$ to be the guaranteed number of proper $L$-packings of size $q$ of $G$ over all list assignments $L$ that assign $q$ colors to each vertex of $G$. The chromatic polynomial of $G$, $P(G,q)$, is $P^\star(G,q,1)$, and the list color function of $G$, $P_\ell(G,q)$, is $P_\ell^\star(G,q,1)$. 

It is a fundamental question to ask how the enumerative function of a new notion of coloring compares to the classical chromatic polynomial. It is well known that the list color function need not always equal the chromatic polynomial, but it does equal the chromatic polynomial when the number of colors is large enough. Recently, Dong and Zhang~\cite{DZ22} (improving upon results in~\cite{D92}, \cite{T09}, and~\cite{WQ17}, that answered a question of Kostochka and Sidorenko~\cite{KS90}) showed that $P_{\ell}(G,q)=P(G,q)$ whenever $q \geq |E(G)|-1$ for any graph $G$. Naturally, we ask whether $P_{\ell}^\star(G,q,k)$ equals $P^\star(G,q,k)$ when $q$ is large enough. Towards this, we show that $P_{\ell}^\star(G,q,k) = P^\star(G,q,k)$ whenever $q \geq nk(k-1)/2 + mk - 1$ where $G$ is an $n$-vertex graph with $m$ edges. This generalizes the aforementioned result of Dong and Zhang which corresponds to the case $k=1$. We also affirmatively answer this question for trees when $k=q$.

Recently, two sets of authors~\cite{CC23, CS24} have obtained new bounds on the list packing number of planar graphs and its subfamilies. Recall, $\chi_\ell^\star(G) \le q$ is equivalent to $P_\ell^\star(G,q)>0$. We use a polynomial method (see the discussion in~\cite{T14}), previously used in counting list colorings, DP-colorings, and colorings of $S$-labeled graphs~\cite{BG22,DKM22,DK23}, to generalize bounds on the list packing number of sparse graphs to exponential lower bounds on the corresponding list packing functions. 

In the rest of this section, we formally define these concepts and give an outline of our results.

\subsection{Basic Terminology and Notation}

In this paper all graphs are nonempty, finite, simple graphs unless otherwise noted.  Generally speaking we follow West~\cite{W01} for terminology and notation.  The set of natural numbers is $\N = \{1,2,3, \ldots \}$.  For $m \in \N$, we write $[m]$ for the set $\{1, \ldots, m \}$, and we take $[0]$ to be the empty set.  We write $!m$ for the number of derangements of $[m]$. Also, $K_{n,m}$ denotes the complete bipartite graphs with partite sets of size $n$ and $m$.  

If $G$ is a graph and $S \subseteq V(G)$, we use $G[S]$ for the subgraph of $G$ induced by $S$.  If $u$ and $v$ are adjacent in $G$, $uv$ or $vu$ refers to the edge between $u$ and $v$.  We write $N_G(v)$ (resp., $N_G[v]$) for the neighborhood (resp., closed neighborhood) of vertex $v$ in the graph $G$. The maximum average degree of a graph $G$, denoted $\text{mad}(G)$, is the maximum of the average degrees of its subgraphs.

The Cartesian product of graphs $G$ and $H$, denoted $G \mathbin{\square} H$, is the graph with vertex set $V(G) \times V(H)$ and edges created so that $(u,v)$ is adjacent to $(u',v')$ if and only if either $u=u'$ and $vv' \in E(H)$ or $v=v'$ and $uu' \in E(G)$.

\subsection{List Coloring and the List Color Function} 

In classical vertex coloring one wishes to color the vertices of a graph $G$ with colors from $[q]$ so that adjacent vertices in $G$ receive different colors, a so-called \emph{proper $q$-coloring}.  The \emph{chromatic number} of a graph, denoted $\chi(G)$, is the smallest $q$ such that $G$ has a proper $q$-coloring.  List coloring is a generalization of classical vertex coloring introduced independently by Vizing~\cite{V76} and Erd\H{o}s, Rubin, and Taylor~\cite{ET79} in the 1970s.  In list coloring, we associate a \emph{list assignment} $L$ with a graph $G$ so that each vertex $v \in V(G)$ is assigned a list of available colors $L(v)$ (we say $L$ is a list assignment for $G$).  We say $G$ is \emph{$L$-colorable} if there is a proper coloring $f$ of $G$ such that $f(v) \in L(v)$ for each $v \in V(G)$ (we refer to $f$ as a \emph{proper $L$-coloring} of $G$).  A list assignment $L$ is called a \emph{$q$-assignment} for $G$ if $|L(v)|=q$ for each $v \in V(G)$.  We say $G$ is \emph{$q$-choosable} if $G$ is $L$-colorable whenever $L$ is a $q$-assignment for $G$.  The \emph{list chromatic number} of a graph $G$, denoted $\chi_\ell(G)$, is the smallest $q$ such that $G$ is $q$-choosable.  It is immediately obvious that for any graph $G$, $\chi(G) \leq \chi_\ell(G)$.  Moreover, it is well-known that the gap between the chromatic number and list chromatic number of a graph can be arbitrarily large.

In 1912 Birkhoff~\cite{B12} introduced the notion of the chromatic polynomial with the hope of using it to make progress on the four color problem.  For $q \in \N$, the \emph{chromatic polynomial} of a graph $G$, $P(G,q)$, is the number of proper $q$-colorings of $G$. It is well-known that $P(G,q)$ is a polynomial in $q$ of degree $|V(G)|$ (e.g., see~\cite{DKT05, B94}). 

The notion of chromatic polynomial was extended to list coloring in the early 1990s by Kostochka and Sidorenko~\cite{KS90}.  If $L$ is a list assignment for $G$, we use $P(G,L)$ to denote the number of proper $L$-colorings of $G$. The \emph{list color function} $P_\ell(G,q)$ is the minimum value of $P(G,L)$ where the minimum is taken over all possible $q$-assignments $L$ for $G$.  Since a $q$-assignment could assign the same $q$ colors to every vertex in a graph, it is clear that $P_\ell(G,q) \leq P(G,q)$ for each $q \in \N$.  In general, the list color function can differ significantly from the chromatic polynomial for small values of $q$.  So naturally understanding the list color functions of graphs for small values of $q$ has received some attention in the literature (see e.g.,~\cite{T09} for some discussion).  Recently, a bound on the list color functions of sufficiently sparse graphs was discovered using a polynomial technique, and this bound is of particular interest for small values of $q$.

\begin{pro}[\cite{DK23}]\label{prop: listcoloring}
Suppose $G$ is an $n$-vertex graph with $m$ edges, and $q$ is a positive integer greater than 1 satisfying $\chi_{\ell}(G) \leq q$.  If $m \leq (q-1)n$, then 
$$P_{\ell}(G,q) \geq q^{n-\frac{m}{q-1}}.$$
\end{pro}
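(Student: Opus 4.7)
The plan is to apply the Alon--Füredi theorem to the standard graph polynomial. Define
\[
F_G = \prod_{uv \in E(G)} (x_u - x_v) \in \Z[x_v : v \in V(G)],
\]
after fixing any orientation of the edges (the overall sign is irrelevant). For any list assignment $L$ of $G$ and any choice $\phi$ with $\phi(v) \in L(v)$, the value $F_G(\phi(v) : v \in V(G))$ is nonzero if and only if $\phi$ is a proper $L$-coloring. Consequently, for any $q$-assignment $L$, $P(G,L)$ equals the number of nonzeros of $F_G$ on the grid $\prod_{v \in V(G)} L(v)$.

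Next, since $\chi_\ell(G) \le q$, the graph $G$ admits at least one proper $L$-coloring, so $F_G$ does not vanish identically on $\prod_v L(v)$. The Alon--Füredi theorem then gives that the number of nonzeros of $F_G$ on this grid is at least
\[
\min \left\{ \prod_{v \in V(G)} t_v \ :\ 1 \le t_v \le q,\ \sum_v (q - t_v) \le \deg F_G \right\}.
\]
Since $\deg F_G \le m$ and $m \le (q-1)n$ by hypothesis, this feasibility set is nonempty.

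Finally, I evaluate the minimum. Viewing each $q - t_v$ as a ``discount'' in $\{0, 1, \ldots, q-1\}$, the product $\prod t_v$ is minimized by concentrating as much discount as possible at individual vertices: if two coordinates have the same sum, the product is smaller at the extreme. Writing $m = k(q-1) + r$ with $0 \le r < q-1$, this yields minimum value $(q-r)\,q^{n-k-1}$ (interpreted as $q^{n-k}$ if $r = 0$). The remaining analytic step is the inequality $(q-r)\,q^{n-k-1} \ge q^{n - m/(q-1)}$, which after cancellation reduces to $q - r \ge q^{1 - r/(q-1)}$. Setting $\alpha = r/(q-1) \in [0,1)$, weighted AM--GM gives
\[
q^{1-\alpha} \cdot 1^{\alpha} \le (1-\alpha)q + \alpha = q - r,
\]
closing the bound. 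The main (modest) obstacle in this plan is precisely this last optimization: one must verify that the minimum over integer tuples $(t_v)$ is attained at the ``concentrate the discount'' configuration and then convert the resulting expression into the clean exponent $n - m/(q-1)$ via weighted AM--GM. The setup of the graph polynomial and the invocation of Alon--Füredi are otherwise essentially mechanical.
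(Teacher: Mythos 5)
Your proof is correct and follows essentially the same route as the source: both apply the Alon--F\"{u}redi non-vanishing bound to the graph polynomial $\prod_{uv\in E(G)}(x_u-x_v)$ evaluated on the grid $\prod_v L(v)$, using $\chi_\ell(G)\le q$ to guarantee a nonzero point. The only difference is that you invoke the general minimum-of-products form of Alon--F\"{u}redi and carry out the ``concentrate the discount'' optimization and weighted AM--GM by hand, whereas the paper's framework (see Theorem~\ref{thm: bound}) uses a pre-simplified version that outputs $q^{n-m/(q-1)}$ directly; your extra steps amount to a correct re-derivation of that simplification.
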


On the other hand, in 1992, answering a question of Kostochka and Sidorenko~\cite{KS90}, Donner~\cite{D92} showed that for any graph $G$ there is an $N \in \N$ such that $P_\ell(G,q) = P(G,q)$ whenever $q \geq N$. Dong and Zhang~\cite{DZ22} (improving upon results in~\cite{D92}, \cite{T09}, and~\cite{WQ17}) subsequently showed the following.

\begin{thm} \label{thm: DZ}
For any graph $G$, $P_{\ell}(G,q)=P(G,q)$ whenever $q \geq |E(G)|-1$.
\end{thm}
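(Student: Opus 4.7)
The inequality $P_\ell(G,q) \le P(G,q)$ is immediate from considering the constant list assignment $L \equiv [q]$, so the content of the theorem is the reverse inequality: for every $q$-assignment $L$ with $q \ge m-1$ (where $m = |E(G)|$), one must show $P(G,L) \ge P(G,q)$.

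The starting point is the inclusion-exclusion expansion valid for any list assignment,
$$P(G,L) = \sum_{F \subseteq E(G)} (-1)^{|F|} \prod_{C} \Bigl|\bigcap_{v \in V(C)} L(v)\Bigr|,$$
where $C$ ranges over the connected components of the spanning subgraph $(V(G), F)$; the constant assignment $L \equiv [q]$ recovers Whitney's rank expansion $P(G,q) = \sum_F (-1)^{|F|} q^{c(F)}$. Fixing a linear order on $E(G)$, I would apply the broken-cycle theorem to both sums: the standard pair-cancellation of $F$ with $F \cup \{e^\star\}$, where $e^\star$ is the largest edge of a cycle contained in $F \cup \{e^\star\}$, survives in the list version because adding an edge internal to a component changes neither $c(F)$ nor the intersection of lists on that component. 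Both sums then collapse to a sum over broken-cycle-free spanning subforests.

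Next, I would introduce defect variables $\delta(e) = q - |L(u) \cap L(v)|$ for each edge $e = uv$ and expand the intersection sizes using lower bounds of the form $\bigl|\bigcap_{v \in V(C)} L(v)\bigr| \ge q - \sum_{e \in E(C)} \delta(e) - \text{(higher-order correction)}$. Substituting these into the collapsed sum would express $P(G,L) - P(G,q)$ as a polynomial in the defects, reducing the inequality to showing that every coefficient is nonnegative when $q \ge m-1$. The threshold should emerge from the fact that the total defect across all $m$ edges is bounded by $qm$, while the positive contributions from the broken-cycle-free spanning subforests grow with $q$, and the crossover should happen precisely at $q = m - 1$.

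The main obstacle I foresee is establishing these coefficient-positivity statements sharply at $q = m-1$. A direct expansion produces mixed-sign terms, so I expect that an extra combinatorial identity on the lattice of broken-cycle-free spanning subforests, or a deletion-contraction induction that carefully tracks how $q$ must decrease when an edge is removed, will be required. The contraction step is particularly delicate because $L(u) \cap L(v)$ need not have size $q$ on the merged vertex, which is exactly the reason a naive induction on $|E(G)|$ does not close; an intermediate parameter (such as the number of edges $e$ with $\delta(e) > 0$) may be needed to drive the induction.
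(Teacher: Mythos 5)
First, a point of context: the paper does not prove this statement at all --- Theorem~\ref{thm: DZ} is imported verbatim from Dong and Zhang~\cite{DZ22} and is used here as a black box (in the proof of Theorem~\ref{thm: equal}). So there is no in-paper proof to compare against; your proposal has to stand on its own as a proof of the Dong--Zhang theorem, and as written it does not.

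Your first two steps are sound: the inclusion--exclusion expansion $P(G,L)=\sum_{F}(-1)^{|F|}\prod_{C}|\bigcap_{v\in V(C)}L(v)|$ is correct, and the broken-circuit involution does carry over to the list setting, because toggling an edge whose endpoints are already connected in $F$ preserves the vertex sets of the components and hence the product of list intersections. The gap is everything after that. You propose to substitute the lower bound $|\bigcap_{v\in V(C)}L(v)|\ge q-\sum_{e}\delta(e)-(\cdots)$ into the collapsed sum, but that sum is still alternating in sign over the broken-circuit-free forests, so a one-sided bound on each term cannot be substituted termwise; you would need matching upper bounds on the negatively signed terms and then a genuinely new argument to control the resulting mixed-sign polynomial in the defects. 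You acknowledge this yourself (``a direct expansion produces mixed-sign terms,'' ``an extra combinatorial identity \dots will be required,'' the contraction step ``does not close''), which is an honest admission that the central positivity claim at the threshold $q=|E(G)|-1$ --- the entire content of the theorem --- is not established. The heuristic that ``the crossover should happen precisely at $q=m-1$'' is not an argument. What you have is a plausible opening move shared by several papers on the list color function, followed by a placeholder where the proof should be; the actual argument in~\cite{DZ22} (and the weaker precursors in~\cite{D92,T09,WQ17}) requires a substantially more delicate grouping and estimation of these alternating terms than anything sketched here.
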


\subsection{List Packing and the List Packing Function}

We begin with some definitions related to list packing (we follow~\cite{CC21}).  Suppose $L$ is a list assignment for graph $G$.  An \emph{$L$-packing of size $k$ of $G$} is a set of $k$ $L$-colorings of $G$, $\{f_1, \ldots, f_k \}$, such that $f_i(v) \neq f_j(v)$ whenever $i, j \in [k]$, $i \neq j$, and $v \in V(G)$.  Moreover, we say that $\{f_1, \ldots, f_k \}$ is \emph{proper} if $f_i$ is a proper $L$-coloring of $G$ for each $i \in [k]$.  It can be shown that for any graph $G$, there is an $m \in \N$ such that $G$ has a proper $L$-packing of size $m$ whenever $L$ is an $m$-assignment for $G$ (see e.g.,~\cite{M23}). The \emph{list packing number} of $G$, $\chi_{\ell}^\star(G)$, is the least $k$ such that $G$ has a proper $L$-packing of size $k$ whenever $L$ is a $k$-assignment for $G$.

Suppose that $L$ is a list assignment for graph $G$.  Let $P^\star(G,L,k)$ denote the number of proper $L$-packings of size $k$ of $G$.  The \emph{list packing function} of $G$, denoted $P_{\ell}^\star(G,q)$, is the minimum value of $P^\star(G,L,q)$ taken over all $q$-assignments $L$ for $G$.  The \emph{classical packing function} of $G$, denoted $P^\star(G,q)$, is equal to $P^\star(G,L,q)$ where $L$ is the list assignment that assigns $[q]$ to each vertex in $V(G)$.  Clearly, $P_{\ell}^\star(G,q) \leq P^\star(G,q)$ for each $q \in \N$. 

More generally, for each $k,q \in \N$ with $k \leq q$ we also define the \emph{$(k,q)$-fold list packing function} of $G$, denoted $P_{\ell}^{\star}(G,q,k)$, as the minimum value of $P^\star(G,L,k)$ taken over all $q$-assignments $L$ for $G$.  Also, the \emph{$(k,q)$-fold classical packing function} of $G$, denoted $P^\star(G,q,k)$, is equal to $P^\star(G,L,k)$ where $L$ is the list assignment that assigns $[q]$ to each vertex in $V(G)$.  Clearly, $P_{\ell}^\star(G,q,k) \leq P^\star(G,q,k)$ for each $q,k \in \N$ satisfying $k \leq q$.  Notice that based upon these definitions, for $q \in \N$, $P_{\ell}^{\star}(G,q,q)= P_{\ell}^{\star}(G,q)$, $P^{\star}(G,q,q)= P^{\star}(G,q)$, $P_{\ell}^{\star}(G,q,1)= P_{\ell}(G,q)$, and $P^{\star}(G,q,1)= P(G,q)$.  

\subsection{Outline of Results}

With Donner's aforementioned result (that answered Kostochka and Sidorenko's corresponding question) in mind, the following question is natural.

\begin{ques} \label{ques: asy}
For every graph $G$ does there exist an $N \in \N$ such that $P_{\ell}^\star(G,q,k) = P^\star(G,q,k)$ whenever $k \leq q$ and $q \geq N$? The case $k=q$ is of particular interest.
\end{ques}

By Donner's result, the answer to Question~\ref{ques: asy} becomes yes when $k \leq q$ is replaced with $k = 1$.

In Section~\ref{trees}, we answer Question~\ref{ques: asy} for trees in the case $k=q$.

\begin{thm} \label{pro: tree}
If $T$ is a tree on $n$ vertices and $q \in \N$, then $P_{\ell}^\star(T,q) = P^{\star}(T,q) = (!q)^{n-1} .$ 
\end{thm}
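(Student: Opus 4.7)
The plan is to reformulate proper $L$-packings of size $q$ in a $q$-assignment setting as systems of bijections and count them greedily along the tree.

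Given a proper $L$-packing $\{f_1, \ldots, f_q\}$ with $|L(v)| = q$ for all $v$, arbitrarily order the colorings and define $\phi_v : [q] \to L(v)$ by $\phi_v(i) = f_i(v)$. Since $f_1(v), \ldots, f_q(v)$ are $q$ distinct elements of $L(v)$, each $\phi_v$ is a bijection, and properness of each $f_i$ translates to the edge constraint: for every $uv \in E(T)$ and every $i \in [q]$, $\phi_u(i) \neq \phi_v(i)$. Conversely, every such system of bijections yields an ordered packing, and since the colorings pairwise disagree at every vertex they are pairwise distinct, so each unordered packing corresponds to exactly $q!$ ordered systems.

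Next, I would root $T$ at an arbitrary vertex $r$ and count ordered systems greedily in a BFS order. There are $q!$ choices for $\phi_r$. For each non-root vertex $v$ with parent $u$ and $\phi_u$ already fixed, I need to count bijections $\phi_v : [q] \to L(v)$ with $\phi_v(i) \neq \phi_u(i)$ for all $i \in [q]$. Setting $t = |L(u) \cap L(v)|$, this constraint is binding at precisely the $t$ indices in $\phi_u^{-1}(L(u) \cap L(v))$, so after an appropriate relabeling the count equals the number of permutations $\sigma$ of $[q]$ satisfying $\sigma(i) \neq i$ for $i$ in a fixed $t$-element subset of $[q]$.

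The key combinatorial input is that this count of ``partial derangements'' is at least $!q$ for every $0 \leq t \leq q$, with equality when $t = q$; this is immediate because every derangement of $[q]$ already satisfies the weaker partial conditions. Combining, for any $q$-assignment $L$ the greedy count produces at least $q! \cdot (!q)^{n-1}$ ordered systems, hence at least $(!q)^{n-1}$ unordered packings, so $P_\ell^\star(T,q) \geq (!q)^{n-1}$. For the constant list assignment $L(v) = [q]$ we always have $t = q$, so the count at each non-root vertex is exactly $!q$, giving $P^\star(T,q) = (!q)^{n-1}$. Together with $P_\ell^\star(T,q) \leq P^\star(T,q)$ this yields the theorem.

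The main conceptual step is the reformulation of packings as edge-compatible systems of bijections $\phi_v$; once that perspective is in hand, the greedy lower bound $\geq !q$ at each non-root vertex is a triviality about partial versus full derangements, and tree structure guarantees that each new vertex introduces exactly one such constraint (against its parent).
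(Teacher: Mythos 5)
Your proposal is correct and follows essentially the same route as the paper: your ``partial derangements are at least as numerous as derangements'' step is exactly the paper's Lemma~\ref{lem: fixedpoints} (stated there as: the number of fixed-point-free bijections between two $q$-sets is at least $!q$), and your greedy BFS count over the rooted tree is the same as the paper's leaf-removal induction, with the $q!$ ordering factor appearing there implicitly in the choice of indexing the colorings.
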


In Section~\ref{Cartesian}, we make further progress on Question~\ref{ques: asy} by making a connection to the Cartesian product of graphs using the framework established in \cite{M23}. One consequence of this connection is the explicit reformulation of the classical packing function in terms of the chromatic polynomial: $P^\star(G,q,k) = P(G \mathbin{\square} K_k, q)/{k!}$, which is equivalent to: $P^\star(G,q,k) = {L(n,k,q)}/{k!}$, where $L(n,k,q)$ is the number of $n \times k$ Latin arrays containing at most $q$ symbols. 

We prove the following generalization of Theorem~\ref{thm: DZ} from the context of counting list colorings to counting packings of list colorings.

\begin{thm} \label{thm: equal}
Suppose $G$ is an $n$-vertex graph with $m$ edges.  If $q,k \in \N$ satisfy $q \geq nk(k-1)/2 + mk - 1$, then  $P_{\ell}^\star(G,q,k) = P^\star(G,q,k)$.
\end{thm}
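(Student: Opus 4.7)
The plan is to reduce to Theorem~\ref{thm: DZ} by lifting the problem from $G$ to the Cartesian product $H = G \mathbin{\square} K_k$. The key observation (essentially the framework from~\cite{M23} alluded to just before the theorem statement) is that proper $L$-packings of size $k$ of $G$ correspond, up to the $k!$ orderings, to proper $L^\star$-colorings of $H$, where $L^\star$ is the lifted list assignment defined by $L^\star((v,i)) = L(v)$ for $v \in V(G)$ and $i \in [k]$. Indeed, interpreting the color of $(v,i)$ as $f_i(v)$, the edges inside the $i$th copy of $G$ force each $f_i$ to be a proper $L$-coloring of $G$, while the edges of each copy of $K_k$ force the $f_i$ to be pairwise disjoint at every vertex. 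This yields the identity $P^\star(G, L, k) = P(H, L^\star)/k!$, which specializes to $P^\star(G, q, k) = P(H, q)/k!$ when $L$ is the constant $[q]$-assignment.

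Next I would check that if $L$ is a $q$-assignment for $G$, then $L^\star$ is a $q$-assignment for $H$, and count the edges of $H$: the $k$ copies of $G$ contribute $mk$ edges and the $n$ copies of $K_k$ contribute $n\binom{k}{2} = nk(k-1)/2$ edges, so $|E(H)| = mk + nk(k-1)/2$. Hence the hypothesis $q \geq nk(k-1)/2 + mk - 1$ is exactly the Dong–Zhang threshold $q \geq |E(H)| - 1$ applied to $H$.

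Applying Theorem~\ref{thm: DZ} to $H$ then gives $P_\ell(H, q) = P(H, q)$. Since $L^\star$ is a $q$-assignment for $H$, this forces $P(H, L^\star) \geq P_\ell(H, q) = P(H, q)$, and dividing by $k!$ yields
$$P^\star(G, L, k) = \frac{P(H, L^\star)}{k!} \geq \frac{P(H, q)}{k!} = P^\star(G, q, k).$$
Taking the minimum over all $q$-assignments $L$ of $G$ gives $P_\ell^\star(G, q, k) \geq P^\star(G, q, k)$, and the reverse inequality is immediate from the definitions.

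The main obstacle is conceptual rather than technical: one must first establish (or invoke from~\cite{M23}) the identity $P^\star(G, L, k) = P(H, L^\star)/k!$ that reduces counting list packings of $G$ to counting list colorings of the single auxiliary graph $H = G \mathbin{\square} K_k$. Once this bijection is in place, the theorem is a direct corollary of Dong–Zhang, and the precise edge count of $H$ produces exactly the bound stated.
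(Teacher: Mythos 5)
Your proposal is correct and follows essentially the same route as the paper: the paper's Lemmas~\ref{lem: connect} and~\ref{lem: connectlist} establish exactly the identity $P^\star(G,L,k) = P(G \mathbin{\square} K_k, L^{(k)})/k!$ (with your $L^\star$ being the paper's $L^{(k)}$), and the proof of Theorem~\ref{thm: equal} then applies Theorem~\ref{thm: DZ} to $H = G \mathbin{\square} K_k$ using the same edge count $|E(H)| = nk(k-1)/2 + mk$. No substantive differences.
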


Note that Theorem~\ref{thm: DZ} corresponds to the case $k=1$ of Theorem~\ref{thm: equal}. 
 Theorem~\ref{thm: equal} makes partial progress towards Question~\ref{ques: asy}, but note that it requires $q$ to be at least quadratic in $k$. As a next step, it would be meaningful to improve this bound on $q$ to a linear function of $k$. 

In Section~\ref{exp}, we use the framework from Section~\ref{Cartesian} and a simplified version (from~\cite{BG22}) of a well-known result of Alon and F\"{u}redi~\cite{AF93} on the number of non-zeros of a polynomial to generalize the bounds on the list packing number to their enumerative counterparts, leading to exponential lower bounds on the corresponding list packing functions of sparse graphs.

\begin{lem}\label{lem: together}
Suppose $G$ is an $n$-vertex graph with $m$ edges.  Suppose $L$ is a $q$-assignment for $G$, and $k \in \N$ is such that $k \leq q$ and $P^\star(G,L,k) > 0$.  If $m \leq n(q-1-(k-1)/2)$, then 
$$P^{\star}(G,L,k) \geq \frac{1}{k!} q^{kn-\frac{nk(k-1)/2 + km}{q-1}}.$$
\end{lem}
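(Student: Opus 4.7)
The plan is to lift the problem from $G$ to the Cartesian product $H = G \mathbin{\square} K_k$ via the framework from Section~\ref{Cartesian}, and then apply the graph polynomial / Alon--F\"uredi machinery to $H$ in essentially the same way Proposition~\ref{prop: listcoloring} is proved for ordinary list colorings. Recall from Section~\ref{Cartesian} that ordered proper $L$-packings of size $k$ of $G$ are in bijection with proper $L'$-colorings of $H$, where $L'$ is the lifted list assignment $L'(v,i) = L(v)$, and unordered packings are counted by dividing by $k!$. So it suffices to lower-bound the number of proper $L'$-colorings of $H$ by $q^{nk - d/(q-1)}$ for an appropriate $d$.

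The natural polynomial to consider is the graph polynomial
\[
f\bigl(\{x_{v,i}\}_{v\in V(G),\,i\in[k]}\bigr) \;=\; \prod_{(v,i)(u,j)\in E(H)} \bigl(x_{v,i} - x_{u,j}\bigr),
\]
whose total degree equals $|E(H)| = mk + n\binom{k}{2} = mk + nk(k-1)/2$. A point of the grid $\prod_{v,i} L(v)$ gives a proper $L'$-coloring of $H$ precisely when $f$ is nonzero there, so our hypothesis $P^{\star}(G,L,k)>0$ guarantees that $f$ does not vanish identically on the grid. The hypothesis $m \leq n(q-1-(k-1)/2)$ is exactly the condition $|E(H)| \leq (q-1)|V(H)|$ needed for the simplified Alon--F\"uredi bound (in the form used in~\cite{BG22,DK23}) to produce the clean exponential lower bound. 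Applying that bound with $|V(H)| = nk$, $d = mk + nk(k-1)/2$, and list sizes $q$ on each variable yields
\[
\#\{\text{ordered proper $L$-packings of size $k$ of $G$}\} \;\geq\; q^{nk - \tfrac{nk(k-1)/2 + mk}{q-1}}.
\]

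Dividing by $k!$ to pass from ordered to unordered packings gives the claimed lower bound on $P^{\star}(G,L,k)$. The main obstacle is mainly bookkeeping: verifying that the Cartesian-product correspondence correctly turns $L$-packings into $L'$-colorings, computing $|V(H)|$ and $|E(H)|$ and the degree of $f$ in each variable, and checking that the sparsity hypothesis translates to exactly the required inequality $|E(H)| \leq (q-1)|V(H)|$. The nontriviality of $f$ on the grid, which is the one substantive hypothesis needed to invoke Alon--F\"uredi, is handed to us for free by the assumption $P^{\star}(G,L,k) > 0$.
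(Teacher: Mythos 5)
Your proposal is correct and follows essentially the same route as the paper: lift to $H = G \mathbin{\square} K_k$ via Lemma~\ref{lem: connectlist}, apply Theorem~\ref{thm: bound} to the graph polynomial of $H$ of degree $|E(H)| = nk(k-1)/2 + mk$ over the grid of lifted lists, and note that $P^\star(G,L,k)>0$ together with Observation~\ref{obs: key} supplies the nonvanishing hypothesis. Your translation of the sparsity condition into $|E(H)| \leq (q-1)|V(H)|$ and the resulting exponent both match the paper's computation exactly.
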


It is natural to compare Lemma~\ref{lem: together}, which counts packings of list colorings, with Proposition~\ref{prop: listcoloring} which counts list colorings. When we plug in $k=1$ in Lemma~\ref{lem: together}, we get the lower bound on the number of list colorings, $q^{n-\frac{m}{q-1}}$, that is implied by Proposition~\ref{prop: listcoloring}, and the required bound on the number of edges, $m \leq n(q-1)$, also remains the same. 

As an illustration, we combine a result from~\cite{CC23} and Lemma~\ref{lem: together} to show there are exponentially many pairs of disjoint $L$-colorings for any $3$-assignment $L$ of a planar graph of girth at least 8.  

\begin{cor} \label{cor: 2colorings}
Suppose $G$ is an $n$-vertex planar graph of girth at least 8.  Then,
$$P_{\ell}^\star(G,3,2) \geq \frac{3^{n/6}}{2}.$$
\end{cor}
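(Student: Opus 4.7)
The plan is to set $q=3$ and $k=2$ in Lemma~\ref{lem: together} and feed in the standard edge bound for girth-$8$ planar graphs, using the cited result from~\cite{CC23} only to supply the hypothesis that proper $L$-packings of size $2$ exist. Specifically, the result in~\cite{CC23} asserts that every planar graph of girth at least $8$ satisfies $\chi_\ell^\star(G)\le 3$, so for any $3$-assignment $L$ there is a proper $L$-packing of size $3$; restricting to any two of its three colorings gives a proper $L$-packing of size $2$, which verifies the nontriviality assumption $P^\star(G,L,k)>0$ of Lemma~\ref{lem: together}.

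Next, I would invoke Euler's formula: for a planar graph with girth at least $g$, $m \le \frac{g}{g-2}(n-2)$, and with $g=8$ this gives $m \le \frac{4}{3}(n-2) \le \frac{4n}{3}$. With $q=3$ and $k=2$ the edge hypothesis of Lemma~\ref{lem: together} becomes $m \le n(q-1-(k-1)/2) = \tfrac{3n}{2}$, which is comfortably satisfied since $\tfrac{4n}{3} \le \tfrac{3n}{2}$.

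Applying Lemma~\ref{lem: together} to an arbitrary $3$-assignment $L$ therefore yields
\[
P^\star(G,L,2) \;\ge\; \frac{1}{2!}\, 3^{\,2n - \frac{n\cdot 2\cdot 1/2 + 2m}{3-1}} \;=\; \frac{1}{2}\, 3^{\,2n - (n+2m)/2}.
\]
Using $m \le 4n/3$ in the exponent gives $2n - (n+2m)/2 \ge 2n - (n + 8n/3)/2 = 2n - 11n/6 = n/6$, so $P^\star(G,L,2) \ge \tfrac{1}{2}\cdot 3^{n/6}$. Taking the minimum over all $3$-assignments $L$ delivers $P_\ell^\star(G,3,2) \ge \tfrac{3^{n/6}}{2}$, completing the proof.

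There is no real obstacle here: the whole content is in Lemma~\ref{lem: together} and the cited list-packing result for planar graphs of girth $\ge 8$; the work reduces to the Euler-formula edge count and a short arithmetic simplification of the exponent.
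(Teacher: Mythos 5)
Your overall route is exactly the paper's: verify the positivity hypothesis of Lemma~\ref{lem: together} via the result of~\cite{CC23}, bound the edge count by Euler's formula, and do the arithmetic in the exponent. The edge bound $m\le 4n/3\le 3n/2$, the exponent computation $2n-(n+2m)/2\ge n/6$, and the $1/k!=1/2$ factor are all correct and match the paper.

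However, your justification of the hypothesis $P^\star(G,L,2)>0$ rests on a false statement. You claim that~\cite{CC23} shows $\chi_\ell^\star(G)\le 3$ for planar graphs of girth at least $8$, i.e., that every $3$-assignment admits a proper packing of size~$3$, from which you extract a size-$2$ packing by discarding one coloring. That is not what~\cite{CC23} proves, and it is in fact false: the paper explicitly notes that there is an $n$-vertex planar graph $G$ of girth at least $8$ with $P_\ell^\star(G,3,3)=0$, so no size-$3$ packing need exist. The actual result (Theorem~\ref{thm: recent2} in the paper) is precisely $P_\ell^\star(G,3,2)>0$, i.e., every $3$-assignment admits a proper packing of size~$2$ directly; this is exactly the hypothesis Lemma~\ref{lem: together} needs, with no intermediate ``restrict from size $3$'' step available or required. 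The error is easily repaired by citing the correct form of the result, and the rest of your argument then goes through verbatim.
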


Note that it is shown in \cite{CC23} that there is an $n$-vertex planar graph $G$ of girth at least 8 satisfying $P_{\ell}^\star(G,3,3)=0$.

Moreover, any future  improvement in the bounds on the list packing number of sparse graphs (as defined by the bound on number of edges in  Lemma~\ref{lem: together}) would immediately lead to corresponding exponentially many list packings using Lemma~\ref{lem: together} without any additional work.

\section{Trees} \label{trees}

In this section, our aim is prove the following which gives an affirmative answer to Question~\ref{ques: asy} for trees when $k=q$.

\begin{customthm} {\bf \ref{pro: tree}}
If $T$ is a tree on $n$ vertices and $q \in \N$, then $P_{\ell}^\star(T,q) = P^{\star}(T,q) = (!q)^{n-1} .$ 
\end{customthm}

We start with a couple of trivial trees.

\begin{pro} \label{pro: easytree}
  $P^\star_{\ell}(K_1,q)  = P^\star(K_1,q) = 1$ and $P^\star(K_2, q) = !q$.
\end{pro}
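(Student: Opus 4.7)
The plan is to verify each equality by direct enumeration, relying on the following key observation: under any $q$-assignment $L$, a proper $L$-packing of size $q$ must use each color of $L(v)$ exactly once at vertex $v$, since the $q$ pairwise distinct values drawn from a list of exactly $q$ colors are forced to exhaust the list.

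For $K_1$, I would let $v$ be its lone vertex and note that a proper $L$-packing of size $q$ is simply an unordered set of $q$ pairwise distinct colorings, each specified by one element of $L(v)$. The observation above leaves exactly one such set, namely $L(v)$ itself viewed as a set of single-vertex colorings. Since this count is independent of the $q$-assignment $L$, we get $P^\star_\ell(K_1,q) = 1$, and taking $L(v)=[q]$ yields $P^\star(K_1,q) = 1$.

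For $K_2$ with vertices $u,v$ and the constant list assignment $[q]$, I would take an arbitrary proper $L$-packing $\{f_1, \ldots, f_q\}$ and use the observation to canonically relabel so that $f_i(u) = i$ for each $i \in [q]$; this is unambiguous because a packing is a set (not a tuple) of colorings. Then $\sigma(i) := f_i(v)$ defines a permutation of $[q]$, and the proper coloring condition $f_i(u) \neq f_i(v)$ translates to $\sigma(i) \neq i$ for all $i$, i.e.\ $\sigma$ is a derangement. This correspondence is evidently a bijection between proper $L$-packings of size $q$ of $K_2$ and derangements of $[q]$, giving $P^\star(K_2,q) = \,!q$.

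The only mildly subtle point is the canonical relabeling for $K_2$; once one notes that the packing is an unordered set and the $u$-colors cover $[q]$ exactly, the labeling by $f_i(u) = i$ is forced and both counts reduce to routine bijective arguments.
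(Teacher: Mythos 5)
Your proposal is correct and takes essentially the same approach as the paper: both establish a bijection between proper $L$-packings of size $q$ of $K_2$ and derangements of $[q]$ (the paper maps derangements to packings via $f_i(x)=i$, $f_i(y)=d(i)$; you go in the reverse direction via the canonical relabeling). The only cosmetic difference is that the paper treats $q=1$ as a separate explicit case, while your bijection handles it implicitly since both sides are empty.
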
 

\begin{proof}
Suppose $G_i = K_i$ for $i \in [2]$ and that $L_i$ is the $q$-assignment for $G_i$ that assigns the list $[q]$ to each vertex of $G_i$.  The result is obvious in the case that $i=1$.  So, suppose that $i=2$ and $V(G_2) = \{x,y \}$.  In the case that $q=1$ it is clear that $P^\star(G_2, L_2, 1) = 0 = !1$ since there is no proper $L_2$-coloring of $G_2$.  Now, suppose that $q \geq 2$.  Let $A$ be the set of all proper $L_2$-packings of size $q$ of $G_2$, and let $D_q$ be the set of all derangements of $[q]$.  Consider the function $f: D_q \rightarrow A$ given by $f(d) = \{f_1, \ldots, f_q \}$ where $f_i$ is the proper $L_2$-coloring of $G_2$ given by $f_i(x) = i$ and $f_i(y) = d(i)$ (clearly $\{f_1, \ldots, f_q \} \in A$).  It is easy to see that $f$ is a bijection.  Consequently, $P^\star(G_2, L_2, q) = |A| = |D_q| = !q$. 
\end{proof}  

Now, we can find the classical packing function for all trees.

\begin{pro} \label{pro: classictree}
Suppose $T$ is a tree on $n$ vertices.  Then, $P^\star(T,q) = (!q)^{n-1}$.
\end{pro}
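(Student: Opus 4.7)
The plan is to induct on $n$, the number of vertices of $T$. The base cases $n=1$ (where the empty product gives $(!q)^0 = 1$, matching the unique packing at a single vertex, in which each of the $q$ colorings must use a distinct value) and $n=2$ (the edge case, handled by Proposition~\ref{pro: easytree}) are immediate. For the inductive step, I would pick a leaf $v$ of $T$ with unique neighbor $u$, and let $T' = T - v$, a tree on $n-1$ vertices to which the inductive hypothesis applies. To avoid the bookkeeping nuisance of working with unordered sets of colorings throughout, I would first pass to ordered packings: define $Q(T, q)$ to be the number of $q$-tuples $(f_1, \ldots, f_q)$ of proper $L$-colorings of $T$ (with $L$ the constant list assignment $[q]$) such that $f_i(w) \neq f_j(w)$ for every $w \in V(T)$ and every $i \neq j$ in $[q]$. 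Then $Q(T, q) = q! \cdot P^\star(T, q)$ since each unordered packing corresponds to exactly $q!$ orderings.

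The main step is then to establish the recurrence $Q(T, q) = (!q) \cdot Q(T', q)$. The idea is that an ordered packing of $T$ is uniquely determined by its restriction $(g_1, \ldots, g_q)$ to $T'$, which must itself be an ordered packing of $T'$, together with the choice of values $c_i = f_i(v) \in [q]$ for each $i \in [q]$. The packing condition at $v$ forces the tuple $(c_1, \ldots, c_q)$ to be a permutation of $[q]$, while properness on the edge $uv$ forces $c_i \neq g_i(u)$ for each $i$. Since the packing condition at $u$ guarantees that $(g_1(u), \ldots, g_q(u))$ is itself a permutation of $[q]$, after relabeling via this permutation the count of valid tuples $(c_1, \ldots, c_q)$ is exactly the number of derangements of $[q]$, namely $!q$. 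Conversely, any choice of such a derangement gives a valid extension, so each ordered packing of $T'$ extends to exactly $!q$ ordered packings of $T$.

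Combining the recurrence with the inductive hypothesis gives $Q(T', q) = q! \cdot (!q)^{n-2}$, hence $Q(T, q) = q! \cdot (!q)^{n-1}$, and therefore $P^\star(T, q) = (!q)^{n-1}$, as desired. The only subtle point is the passage to ordered packings and the clean identification of the leaf-extension count with a derangement count; this is essentially the same bijection used in Proposition~\ref{pro: easytree} for $K_2$, now applied to the edge $uv$ after the rest of the tree has been frozen. I expect no substantive obstacle beyond this bookkeeping, since the tree structure guarantees that leaf removal decouples the single new constraint from everything else.
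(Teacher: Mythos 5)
Your proposal is correct and follows essentially the same route as the paper: induction on $n$, removal of a leaf, and the observation that the colors at the leaf's neighbor form a permutation of $[q]$, so that the valid extensions to the leaf are counted by derangements, giving a factor of $!q$ per leaf. The passage to ordered packings (with the $q!$ conversion factor) is only a bookkeeping variant of the paper's explicit bijection between $A \times D_q$ and the set of packings of $T$.
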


\begin{proof}
The proof is by induction on $n$.  Note that when $n \in [2]$, the desired result follows by Proposition~\ref{pro: easytree}.  So, suppose that $n \geq 3$ and the desired result holds for all natural numbers less than $n$.  Suppose $L$ is the $q$-assignment for $T$ that assigns the list $[q]$ to each vertex of $T$.  In the case that $q=1$ it is clear that $P^\star(T, L, 1) = 0 = (!1)^{n-1}$ since there is no proper $L$-coloring of $T$.  Now, suppose that $q \geq 2$ and $y$ is a leaf of $T$ with $N_T(y) = \{x\}$.  Let $T' = T - y$.  Note that $T'$ is a tree on $n-1$ vertices, and let $L'$ be the $q$-assignment for $T'$ obtained by restricting the domain of $L$ to $V(T')$.  Let $A$ be the set of all proper $L'$-packings of size $q$ of $T'$.  By the inductive hypothesis we know $|A| = (!q)^{n-2}$.  Let $B$ be the set of all proper $L$-packings of size $q$ of $T$, and let $D_q$ be the set of all derangements of $[q]$.  Consider the function $f : A \times D_q \rightarrow B$ given by $f((\{f_1, \ldots, f_q \}, d)) = \{g_1, \ldots, g_q \}$ where $g_i$ is the proper $L$-coloring of $T$ given by 
\[g_i(v) = \begin{cases} 
      f_i(v) &\text{if $v \in V(T')$}  \\
      d(f_i(x)) &\text{if $v=y$.} 
   \end{cases}
\]
It is easy to see that $f$ is a bijection.  So, $P^\star(T,L,q) = |B| = |A| |D_q| = (!q)^{n-1}$. 
\end{proof}

We need one more lemma before we prove Theorem~\ref{pro: tree}.

\begin{lem} \label{lem: fixedpoints}
Suppose that $A$ and $B$ are sets of size $q \in \N$.  Then, the number of bijections from $A$ to $B$ with no fixed points is at least $!q$.
\end{lem}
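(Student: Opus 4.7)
The plan is to exhibit an injection from the set of derangements of $[q]$ (which has cardinality $!q$) into the set of fixed-point-free bijections from $A$ to $B$. The point is that although $A$ and $B$ may differ as sets, a carefully chosen ``reference'' bijection $\tau : A \to B$ will let us turn each derangement of $A$ into a fixed-point-free bijection from $A$ to $B$ by composition.

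Let $C = A \cap B$. Since $|A \setminus C| = q - |C| = |B \setminus C|$, there exists a bijection $\tau : A \to B$ with $\tau(x) = x$ for every $x \in C$, and I fix one such $\tau$. Identify the derangements of $[q]$ with $D_A$, the set of fixed-point-free permutations of $A$ (so $|D_A| = {!q}$), and for each $d \in D_A$ define $\sigma_d : A \to B$ by $\sigma_d := \tau \circ d$.

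I then need to verify two things. First, each $\sigma_d$ has no fixed points: if $\sigma_d(x) = x$ for some $x \in A$, then $x \in B$ as well, so $x \in C$; by the defining property of $\tau$ we then have $\tau^{-1}(x) = x$, and applying $\tau^{-1}$ to $\tau(d(x)) = x$ yields $d(x) = x$, contradicting $d \in D_A$. Second, the assignment $d \mapsto \sigma_d$ is injective, since $\tau$ is a bijection and so $d = \tau^{-1} \circ \sigma_d$ can be recovered from $\sigma_d$. Therefore the number of fixed-point-free bijections from $A$ to $B$ is at least $|D_A| = {!q}$, as required.

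No substantial obstacle is anticipated. The only subtle step is insisting that $\tau$ fix $C$ pointwise; this is precisely the condition that lets the derangement property of $d$ translate into the fixed-point-free property of $\sigma_d$, because any candidate fixed point of $\sigma_d$ must lie in $C$ where $\tau$ acts trivially. An alternative would be a direct inclusion-exclusion count of the fixed-point-free bijections in terms of $s := |C|$ followed by showing that the resulting expression is minimized at $s = q$, but the injection above is cleaner and avoids the algebraic monotonicity check.
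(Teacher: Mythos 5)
Your proof is correct and is essentially the same argument as the paper's: both inject the derangements of $[q]$ into the fixed-point-free bijections by composing with a reference bijection $A\to B$ that fixes $A\cap B$ pointwise (the paper realizes this via an indexing $a_i\mapsto b_i$ with the common elements listed first, rather than an abstract $\tau$). Your version has the minor merits of not needing the separate $A=B$ case and of explicitly verifying that $\sigma_d$ is fixed-point-free, which the paper leaves implicit.
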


\begin{proof}
Note that the result is obvious when $A=B$.  So, suppose that $A \neq B$.  Name the elements of the sets so that $A = \{a_1, \ldots, a_q \}$, $B = \{b_1, \ldots, b_q \}$, and there is a $t \in [q]$ such that $a_i = b_i$ whenever $i \in [t-1]$ and $\{a_i : i \geq t \} \cap \{b_i: i \geq t\} = \emptyset$.  Let $D_q$ be the set of all derangements of $[q]$, and let $F$ be the set of all bijections from $A$ to $B$ with no fixed points.  Consider the function $f : D_q \rightarrow F$ given by $f(d) = g$ where $g$ is the element of $F$ defined by $g(a_i) = b_{d(i)}$ for each $i \in [q]$.  Since $f$ is injective, $|F| \geq |D_q| = !q$.  
\end{proof}

We are now ready to finish the proof of Theorem~\ref{pro: tree}.

\begin{proof}
We need only show $P_{\ell}^\star(T,q) = (!q)^{n-1}.$ The proof is by induction on $n$.  Note that when $n =1$, the desired result follows by Proposition~\ref{pro: easytree}.  So, suppose that $n \geq 2$ and the desired result holds for all natural numbers less than $n$.  In the case that $q=1$ it is clear that $0 \leq P_{\ell}^\star(T, 1) \leq P^\star(T,1)  = 0 = (!1)^{n-1}$.  Now, suppose that $q \geq 2$, and $L$ is a $q$-assignment for $T$ such that $P^\star(T,L,q) = P^\star_{\ell}(T,q)$. Suppose $y$ is a leaf of $T$ with $N_T(y) = \{x\}$.  Let $T' = T - y$.  Note that $T'$ is a tree on $n-1$ vertices, and let $L'$ be the $q$-assignment for $T'$ obtained by restricting the domain of $L$ to $V(T')$.  Let $A$ be the set of all proper $L'$-packings of size $q$ of $T'$.  By the inductive hypothesis we know $|A| \geq (!q)^{n-2}$.  Let $B$ be the set of all proper $L$-packings of size $q$ of $T$.  Let $D$ be the set of all bijections from $L(x)$ to $L(y)$ without any fixed points.  Consider the function $f : A \times D \rightarrow B$ given by $f((\{f_1, \ldots, f_q \}, d)) = \{g_1, \ldots, g_q \}$ where $g_i$ is the proper $L$-coloring of $T$ given by 
\[g_i(v) = \begin{cases} 
      f_i(v) &\text{if $v \in V(T')$}  \\
      d(f_i(x)) &\text{if $v=y$.} 
   \end{cases}
\]
It is easy to see that $f$ is a bijection.  Using all we have deduced along with Proposition~\ref{pro: classictree} and Lemma~\ref{lem: fixedpoints} yields,
$$(!q)^{n-1} = P^\star(T,q) \geq P^\star_{\ell}(T,q) = P^\star(T,L,q) = |B| = |A| |D| \geq (!q) (!q)^{n-2}$$
as desired. 
\end{proof}

\section{A Connection to a Cartesian Product} \label{Cartesian}

In this section, we aim to prove the following result that generalizes Theorem~\ref{thm: DZ} from the context of counting list colorings to counting packings of list colorings. Our main idea will be a connection to the Cartesian product of a graph with a complete graph.

\begin{customthm} {\bf \ref{thm: equal}}
Suppose $G$ is an $n$-vertex graph with $m$ edges.  If $q,k \in \N$ satisfy $q \geq nk(k-1)/2 + mk - 1$, then  $P_{\ell}^\star(G,q,k) = P^\star(G,q,k)$.
\end{customthm}

In the rest of this section, suppose that $G$ is an $n$-vertex graph with $V(G) = \{v_1, \ldots, v_n \}$.  Additionally, when $H = G \mathbin{\square} K_k$ for some $k \in \N$, we will suppose that the vertex set of the copy of $K_k$ used to form $H$ is $\{w_1, \ldots, w_k \}$.  When $L$ is a $q$-assignment for $G$, we let $L^{(k)}$ be the $q$-assignment for $H = G \mathbin{\square} K_k$ given by $L^{(k)}(v,w_i) = L(v)$ for each $v \in V(G)$ and $i \in [k]$. 

The strategy in this section is to follow the framework established in~\cite{M23}. One key observation adapted from~\cite{M23} is as follows.

\begin{obs} \label{obs: key}
Suppose $L$ is a $q$-assignment for graph $G$. Graph $G$ has a proper $L$-packing of size $k$ if and only if there is a proper $L^{(k)}$-coloring of $G \mathbin{\square} K_k$.
\end{obs}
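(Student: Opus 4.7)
The plan is to establish the equivalence by an explicit correspondence between proper $L^{(k)}$-colorings of $H := G \mathbin{\square} K_k$ and proper $L$-packings of size $k$ of $G$. The starting point is to observe that the edges of $H$ split into two natural families: the ``horizontal'' edges $(u,w_i)(u',w_i)$ with $uu' \in E(G)$, which form $k$ disjoint copies of $G$ indexed by $i \in [k]$, and the ``vertical'' edges $(v,w_i)(v,w_j)$ with $i \neq j$, which form $|V(G)|$ disjoint copies of $K_k$ indexed by $v \in V(G)$. Each family will encode exactly one of the two defining conditions of a proper $L$-packing.

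For the forward direction, I would take a proper $L^{(k)}$-coloring $c$ of $H$ and define $f_i(v) := c((v,w_i))$ for each $i \in [k]$ and $v \in V(G)$. The equality $L^{(k)}((v,w_i)) = L(v)$ ensures $f_i$ is an $L$-coloring of $G$; properness of $c$ on the horizontal edges says each $f_i$ is a proper $L$-coloring; and properness of $c$ on the vertical edges gives $f_i(v) \neq f_j(v)$ for all distinct $i, j \in [k]$ and all $v \in V(G)$, which is precisely the packing condition. Hence $\{f_1, \ldots, f_k\}$ is a proper $L$-packing of size $k$ of $G$.

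For the reverse direction, I would fix any labeling $f_1, \ldots, f_k$ of the members of a given proper $L$-packing and define $c((v,w_i)) := f_i(v)$. The same two observations, read in reverse, show that $c$ lands in $L^{(k)}((v,w_i)) = L(v)$ and respects both the horizontal and vertical adjacencies of $H$, so $c$ is a proper $L^{(k)}$-coloring.

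Since the verification is immediate from the definition of the Cartesian product, there is no real obstacle beyond careful bookkeeping. The only minor subtlety worth naming is that an $L$-packing is defined as an unordered \emph{set} of $L$-colorings while the construction of $c$ uses an ordered tuple; however, Observation~\ref{obs: key} only asserts existence, so any ordering of the $k$ distinct members of the packing produces a valid $c$, and the argument goes through.
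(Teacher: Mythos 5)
Your proof is correct, and it matches the approach the paper takes implicitly: the paper states this observation without proof (citing~\cite{M23}), but the correspondence $c((v,w_i)) = f_i(v)$ you describe is exactly the bijection used in the proofs of Lemmas~\ref{lem: connect} and~\ref{lem: connectlist}. Your handling of the ordered-versus-unordered subtlety is also the right resolution, and it is precisely why those later lemmas carry the factor of $k!$.
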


The following lemma reduces the classical packing function to the chromatic polynomial of the Cartesian product of the graph with a complete graph.

\begin{lem} \label{lem: connect}
For any graph $G$, and $q,k \in \N$ satisfying $k \leq q$
$$P^\star(G,q,k) = \frac{P(G \mathbin{\square} K_k, q)}{k!}.$$
\end{lem}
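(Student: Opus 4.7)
The plan is to make Observation~\ref{obs: key} quantitative by setting up an explicit bijection between proper $[q]$-colorings of $H := G \mathbin{\square} K_k$ and \emph{ordered} proper packings of size $k$ of $G$ using the constant list assignment $[q]$, and then divide by $k!$ to account for the fact that a packing is an unordered set.

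First I would define the map. Given a proper coloring $F \colon V(H) \to [q]$, let $f_i \colon V(G) \to [q]$ be $f_i(v) = F(v,w_i)$ for each $i \in [k]$. For every edge $uv \in E(G)$, the pair $(u,w_i)(v,w_i)$ is an edge of $H$, so $f_i$ is a proper $[q]$-coloring of $G$. Moreover, for each $v \in V(G)$ the vertices $(v,w_1),\ldots,(v,w_k)$ induce a copy of $K_k$ in $H$, so $f_1(v),\ldots,f_k(v)$ are pairwise distinct. Thus $(f_1,\ldots,f_k)$ is an ordered proper packing of size $k$. Conversely, any such ordered tuple assembles into a unique function $F \colon V(H) \to [q]$ by $F(v,w_i)=f_i(v)$, and the same two observations show that $F$ is proper. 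So this is a bijection, and the number of proper $[q]$-colorings of $H$ equals the number of ordered proper packings of size $k$ of $G$ using lists $[q]$.

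Next I would pass from ordered to unordered packings. The symmetric group $S_k$ acts on ordered tuples by permuting coordinates, and two tuples yield the same unordered packing if and only if they are in the same orbit. Because the $k$ colorings in a proper packing disagree on every vertex of $G$ (and $V(G)\neq\emptyset$ since $G$ is a graph, which in this paper is nonempty), they are pairwise distinct as functions, so the $S_k$-action is free: each unordered proper packing corresponds to exactly $k!$ ordered tuples. Combining the bijection with this $k!$-to-$1$ quotient yields
\[
P(G \mathbin{\square} K_k, q) \;=\; k! \cdot P^\star(G,q,k),
\]
which is the desired identity.

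There is essentially no main obstacle here; the only point that deserves a line of justification is the freeness of the $S_k$-action, i.e., that distinct indices in an ordered packing give distinct colorings of $G$, which follows immediately from the definition of a packing together with $V(G)\neq\emptyset$.
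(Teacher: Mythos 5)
Your proof is correct and follows essentially the same route as the paper: the paper also establishes a bijection between proper $q$-colorings of $G \mathbin{\square} K_k$ and pairs $(P,\sigma)$ where $P$ is a packing and $\sigma$ is an ordering of it (a bijection $[k]\to P$), then divides by $k!$. Your explicit remark that the $S_k$-action is free because the $k$ colorings in a packing are pairwise distinct is a point the paper leaves implicit (in asserting $|\pi_P|=k!$), so your write-up is if anything slightly more careful.
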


\begin{proof}
The result is clear when $q < \chi(G)$.  So, assume that $q \geq \chi(G)$.  Suppose $L$ is the $q$-assignment for $G$ that assigns $[q]$ to every vertex in $G$.  Let $A$ be the set of all proper $L$-packings of size $k$ of $G$.  Clearly, $|A|= P^\star(G,q,k)$, and $A$ is nonempty by Observation~\ref{obs: key} since $q \geq \chi(G) = \chi(G \mathbin{\square} K_k)$. For each $P \in A$, let $\pi_P$ be the set of bijections from $[k]$ to $P$.  Then, let $D = \bigcup_{P \in A} (\{P\} \times \pi_P)$, and let $C$ be the set of all proper $q$-colorings of $H=G \mathbin{\square} K_k$.

Now, let $f: D \rightarrow C$ be given by $f(P, \sigma) = c$ where $c$ is the proper $q$-coloring of $H$ given by $c(v_j,w_i) = \sigma(i)(v_j)$ for each $i \in [k]$ and $j \in [n]$.  Also, let $g : C \rightarrow D$ be given by $g(c) = (P, \sigma)$ where $f_i : V(G) \rightarrow [q]$ is given by $f_i(v) = c(v,w_i)$ for each $i \in [k]$, $P = \{f_i : i \in [k]\}$, and $\sigma$ is the element of $\pi_P$ given by $\sigma(i) = f_i$.  Since $f$ and $g$ are inverses of each other, $f$ is a bijection which means $|A| = |C|/k!$.  The desired result immediately follows.
\end{proof}

We immediately get the following result from Lemma~\ref{lem: connect}.

\begin{cor} \label{pro: complete}
Suppose $n \in \N$ and $G=K_n$.  Then, $P^\star(G,q,k) = 0$ whenever $k \leq q \leq n$, and $P^\star(G,q,k) = \frac{L(n,k,q)}{k!}$ whenever $q \geq n$ and $k \leq q$ where $L(n,k,q)$ denotes the number of $n \times k$ Latin arrays containing at most $q$ symbols. 
\end{cor}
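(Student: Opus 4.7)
The plan is to apply Lemma~\ref{lem: connect} with $G = K_n$ and then recognize the proper colorings of $K_n \mathbin{\square} K_k$ as precisely the objects counted by $L(n,k,q)$. By Lemma~\ref{lem: connect}, for any $k \leq q$ we have
\[
P^\star(K_n, q, k) \;=\; \frac{P(K_n \mathbin{\square} K_k,\, q)}{k!},
\]
so everything reduces to computing the chromatic polynomial of the rook graph $K_n \mathbin{\square} K_k$.

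Next I would establish a bijection between proper $q$-colorings of $K_n \mathbin{\square} K_k$ and $n \times k$ arrays with entries in $[q]$ whose rows are pairwise distinct \emph{and} whose columns are pairwise distinct—i.e., $n \times k$ Latin arrays on at most $q$ symbols. Given a proper coloring $c$ of $K_n \mathbin{\square} K_k$, define the array $A_c$ by $A_c(i,j) = c(v_i, w_j)$. The edges of $K_n \mathbin{\square} K_k$ coming from the $K_k$-factor (for a fixed vertex $v_i$) force all entries in row $i$ of $A_c$ to be distinct, and the edges coming from the $K_n$-factor (for a fixed $w_j$) force all entries in column $j$ of $A_c$ to be distinct. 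Conversely, any such Latin array determines a proper coloring by reversing this assignment. Hence $P(K_n \mathbin{\square} K_k, q) = L(n,k,q)$, and plugging into the display above yields the second formula.

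For the first part of the statement, note that any $n \times k$ Latin array on at most $q$ symbols requires, in particular, that each of its $k$ columns contain $n$ pairwise distinct entries drawn from $[q]$. If $q < n$ this is impossible, so $L(n,k,q) = 0$ and consequently $P^\star(K_n, q, k) = 0$. Equivalently, one can observe that $\chi(K_n \mathbin{\square} K_k) = \max(n,k) = n$ when $k \leq n$, so $P(K_n \mathbin{\square} K_k, q) = 0$ for $q < n$; either route finishes the case $k \leq q < n$.

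There is no real obstacle here beyond bookkeeping: once Lemma~\ref{lem: connect} is available, the result is a direct translation between colorings of the Cartesian product and the defining combinatorial structure of Latin arrays. The only point that warrants care is making sure the row/column roles of the $K_n$- and $K_k$-factors are matched consistently with the definition of $L(n,k,q)$ used in the statement.
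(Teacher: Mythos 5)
Your proof is correct and is exactly the argument the paper intends: the corollary is stated there as an immediate consequence of Lemma~\ref{lem: connect} together with the identification of proper $q$-colorings of $K_n \mathbin{\square} K_k$ with $n \times k$ Latin arrays on at most $q$ symbols, which is precisely your bijection. One small point worth noting: your vanishing argument (correctly) only covers $q < n$, which quietly exposes that the corollary's first clause ``$k \leq q \leq n$'' overstates the range---at $q = n$ the second clause already gives the positive value $L(n,k,n)/k!$, so the boundary case belongs to the second clause, not the first.
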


Note that when $n=2$ and $k=q$, we get $P^\star(K_2,q) = P^\star(K_2,q,q) = \frac{L(2,q,q)}{q!} = \frac{(q!)(!q)}{q!} = !q$ which agrees with Proposition~\ref{pro: easytree}.  

We are now ready to prove a list version of Lemma~\ref{lem: connect}.

\begin{lem} \label{lem: connectlist}
Suppose $G$ is a graph.  Suppose $L$ is a $q$-assignment for $G$ and $k \in \N$ satisfies $k \leq q$.  Then,
$$P^\star(G,L,k) = \frac{P(G \mathbin{\square} K_k, L^{(k)})}{k!} \geq \frac{P_{\ell}(G \mathbin{\square} K_k , q)}{k!}.$$
\end{lem}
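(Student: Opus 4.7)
The plan is to mirror the bijective argument used in the proof of Lemma~\ref{lem: connect}, with the constant $q$-assignment replaced by the list-extended assignment $L^{(k)}$, and then to obtain the inequality from the definition of the list color function.

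First, I will establish the equality $P^\star(G,L,k) = P(G \mathbin{\square} K_k, L^{(k)})/k!$ by constructing a bijection. Let $A$ be the set of proper $L$-packings of size $k$ of $G$, let $H = G \mathbin{\square} K_k$, and let $C$ be the set of proper $L^{(k)}$-colorings of $H$. For each $P \in A$, let $\pi_P$ be the set of bijections from $[k]$ to $P$, and put $D = \bigcup_{P \in A}(\{P\}\times \pi_P)$. Define $f: D \to C$ by $f(P,\sigma)=c$ where $c(v_j,w_i)=\sigma(i)(v_j)$. I need to check three things for $c$: (i) the color assigned at $(v_j,w_i)$ lies in $L^{(k)}(v_j,w_i)=L(v_j)$, which holds because $\sigma(i)$ is a proper $L$-coloring; (ii) for an edge of $H$ of the form $(v_j,w_i)(v_{j'},w_i)$, properness follows from $\sigma(i)$ being a proper $L$-coloring of $G$; (iii) for an edge $(v_j,w_i)(v_j,w_{i'})$ with $i\neq i'$, properness follows from the packing condition $\sigma(i)(v_j)\neq\sigma(i')(v_j)$. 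The inverse $g:C\to D$ sends a proper $L^{(k)}$-coloring $c$ to $(P,\sigma)$ where $f_i(v):=c(v,w_i)$, $P=\{f_1,\ldots,f_k\}$, and $\sigma(i)=f_i$; the packing and properness properties of $P$ are immediate from $c$ being proper. Since $f$ and $g$ are inverse, $|D|=|C|$, and since each $P\in A$ contributes exactly $k!$ elements to $D$ (the $k$ colorings in a packing are pairwise distinct because they are pairwise disjoint at every vertex), we get $k!\cdot|A|=|C|$, i.e., $P^\star(G,L,k)=P(H,L^{(k)})/k!$.

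For the inequality, I simply observe that $L^{(k)}$ is by construction a $q$-assignment for $H$: each vertex $(v,w_i)$ is given the list $L(v)$, which has size $q$. Therefore, by the definition of the list color function as the minimum of $P(H,L')$ over all $q$-assignments $L'$ for $H$, we have $P(H,L^{(k)}) \geq P_\ell(H,q)$, and dividing by $k!$ yields the claimed bound.

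The argument is essentially routine once Observation~\ref{obs: key} is in hand; the only point that requires a small check is the counting of the fiber $\pi_P$, for which distinctness of the $k$ colorings in a proper packing of size $k$ is needed. Since in a packing $\{f_1,\ldots,f_k\}$ the values $f_i(v)$ at any single vertex $v$ are pairwise distinct, the $k$ colorings themselves are automatically distinct (as long as $V(G)\neq\emptyset$, which is part of our standing hypotheses on graphs), so the factor of $k!$ is correct and there is no anticipated obstacle.
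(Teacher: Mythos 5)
Your proposal is correct and follows essentially the same route as the paper: the same bijection between ordered packings $D=\bigcup_{P\in A}(\{P\}\times\pi_P)$ and proper $L^{(k)}$-colorings of $G\mathbin{\square}K_k$, followed by the observation that $L^{(k)}$ is a $q$-assignment so the list color function gives the lower bound. You in fact supply slightly more detail than the paper (the properness checks and the justification that each fiber has exactly $k!$ elements), and no issues arise.
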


\begin{proof}
Let $H= G \mathbin{\square} K_k$ and $A$ be the set of all proper $L$-packings of size $k$ of $G$.  By observation~\ref{obs: key}, notice $A$ is empty if and only if there are not any proper $L^{(k)}$-colorings of $H$.  So, we may assume that $A$ is nonempty.  Clearly, $|A|= P^\star(G,L,k)$. For each $P \in A$, let $\pi_P$ be the set of bijections from $[k]$ to $P$.  Then, let $D = \bigcup_{P \in A} (\{P\} \times \pi_P)$, and let $C$ be the set of all proper $L^{(k)}$-colorings of $H$.

Now, let $f: D \rightarrow C$ be given by $f(P, \sigma) = c$ where $c$ is the proper $L^{(k)}$-coloring of $H$ given by $c(v_j,w_i) = \sigma(i)(v_j)$ for each $i \in [k]$ and $j \in [n]$.  Also, let $g : C \rightarrow D$ be given by $g(c) = (P, \sigma)$ where $f_i : V(G) \rightarrow [q]$ is given by $f_i(v) = c(v,w_i)$ for each $i \in [k]$, $P = \{f_i : i \in [k]\}$, and $\sigma$ is the element of $\pi_P$ given by $\sigma(i) = f_i$.  Since $f$ and $g$ are inverses of each other, $f$ is a bijection which means $|A| = |C|/k!$.  The desired result immediately follows.
\end{proof}

We can now put all these ingredients together to prove Theorem~\ref{thm: equal}.

\begin{proof}
Let $H = G \mathbin{\square} K_k$.  Notice that $q \geq nk(k-1)/2 + mk - 1 = |E(H)|-1$.  So, Theorem~\ref{thm: DZ} implies that $P_{\ell}(G \mathbin{\square} K_k , q) = P(G \mathbin{\square} K_k , q).$  Now, combining Lemmas~\ref{lem: connect} and~\ref{lem: connectlist}, we obtain:
$$P^\star(G,q,k) \geq P_{\ell}^\star(G,q,k) \geq \frac{P_{\ell}(G \mathbin{\square} K_k , q)}{k!} = \frac{P(G \mathbin{\square} K_k, q)}{k!} = P^\star(G,q,k).$$
The result immediately follows. 
\end{proof}

\section{Exponential Lower Bounds} \label{exp}

The strategy in this section is to follow the framework established in Section~\ref{Cartesian} and then use the ideas established in~\cite{BG22,DK23} to generalize the bounds on the list packing number to their enumerative counterparts, leading to exponential lower bounds on the corresponding list packing functions. Specifically, we wish to use Lemma~\ref{lem: connectlist} in conjunction with a slightly simplified version of a well-known result of Alon and F\"{u}redi~\cite{AF93} on the number of non-zeros of a polynomial.

\begin{thm} [B. Bosek, J. Grytczuk, G. Gutowski, O. Serra, M. Zajac~\cite{BG22}] \label{thm: bound}
Let $\mathbb{F}$ be an arbitrary field, let $A_1$, $A_2$, $\ldots$, $A_n$ be any non-empty subsets of $\mathbb{F}$, and let $B = \prod_{i=1}^n A_i$.  Suppose that $P \in \mathbb{F}[x_1, \ldots, x_n]$ is a polynomial of degree $d$ that does not vanish on all of $B$.  If $S = \sum_{i=1}^n |A_i|$, $t = \max |A_i|$, $S \geq n + d$, and $t \geq 2$, then the number of points in $B$ for which $P$ has a non-zero value is at least $t^{(S-n-d)/(t-1)}.$
\end{thm}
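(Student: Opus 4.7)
The plan is to prove this in two steps: first, establish the general Alon--F\"uredi ``min-product'' lower bound on the number of non-zeros by induction on $n$; then derive the closed-form bound $t^{(S-n-d)/(t-1)}$ via a single concavity estimate.

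Before the induction, I would reduce to the case $\deg_{x_i} P \le |A_i| - 1$ for every $i$. Since $\prod_{\alpha \in A_i}(x_i - \alpha)$ vanishes identically on $A_i$, any monomial $x_i^{|A_i|}$ occurring in $P$ can be rewritten in strictly lower $x_i$-degree without altering the values of $P$ on $B$, and this rewriting does not increase the total degree, so the hypothesis on $d$ is preserved. This preprocessing guarantees that the ``leading coefficient in $x_n$'' picked below has degree in $x_n$ at most $|A_n|-1$.

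Next I would prove by induction on $n$ the following Alon--F\"uredi claim: if $P \not\equiv 0$ on $B$, then the number of non-zeros of $P$ on $B$ is at least $\min \prod_i b_i$ over tuples $(b_1,\dots,b_n)$ with $1 \le b_i \le |A_i|$ and $\sum_i (|A_i|-b_i) \le d$. The base case $n=1$ is just the fact that a degree-$d$ univariate polynomial has at most $d$ roots, giving at least $|A_1|-d$ non-zeros. For the inductive step I would write $P = \sum_j x_n^j P_j(x_1,\dots,x_{n-1})$, let $e$ be the largest $j$ for which $P_j$ is not identically zero on $B' = A_1 \times \cdots \times A_{n-1}$ (such an $e$ exists since $P \not\equiv 0$ on $B$), and note $e \le |A_n|-1$ and $\deg P_e \le d-e$. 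By the inductive hypothesis, $P_e$ has at least $M = \min \prod_{i<n} b_i$ non-zeros in $B'$, where the minimum is over feasible tuples with slack budget $d-e$. At each such non-zero point the univariate polynomial $P(\,\cdot\,,x_n)$ has degree exactly $e$, and hence at least $|A_n|-e$ non-zeros in $A_n$. Setting $b_n = |A_n|-e$ exhausts the remaining slack budget $e$, and the product $M \cdot (|A_n|-e)$ realizes the required min-product lower bound.

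Finally, since $\log$ is concave on $[1,t]$, the chord inequality gives $\log b \ge \tfrac{b-1}{t-1}\log t$ for every $b \in [1,t]$. Using $b_i \le |A_i| \le t$ and the feasibility constraint $\sum_i b_i \ge S - d$, summing yields
$$\log \prod_i b_i \;\ge\; \frac{(\sum_i b_i) - n}{t-1}\log t \;\ge\; \frac{S-n-d}{t-1}\log t,$$
so $\prod b_i \ge t^{(S-n-d)/(t-1)}$, matching the theorem. The main obstacle will be the inductive step: one has to charge the ``loss'' $e = |A_n|-b_n$ in the $n$th coordinate \emph{exactly} against the drop in degree budget from $d$ to $d-e$ used on the smaller box, so that the slacks add to $d$; the preliminary degree reduction is precisely what guarantees $e \le |A_n|-1$ and hence $b_n \ge 1$, keeping the recovered tuple feasible.
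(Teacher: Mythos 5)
The paper does not prove this statement at all: it is imported verbatim as a black box from Bosek, Grytczuk, Gutowski, Serra, and Zaj\k{a}c \cite{BG22}, where it appears as a simplified corollary of the Alon--F\"{u}redi theorem \cite{AF93}. So there is no in-paper proof to compare against; your proposal supplies the missing argument, and it is correct. It is, moreover, essentially the standard derivation: the multivariate degree reduction modulo $\prod_{\alpha\in A_i}(x_i-\alpha)$, the induction on $n$ giving the ``min-product'' form of Alon--F\"{u}redi (number of non-zeros at least $\min\prod_i b_i$ over $1\le b_i\le|A_i|$ with $\sum_i(|A_i|-b_i)\le d$), and then the chord inequality $\log b\ge\frac{b-1}{t-1}\log t$ on $[1,t]$ to convert the combinatorial minimum into the closed form $t^{(S-n-d)/(t-1)}$. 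All the delicate points are handled correctly: the existence of the top index $e$ with $P_e\not\equiv 0$ on $B'$, the fact that the specialized univariate polynomial has degree exactly $e$ (so at least $|A_n|-e\ge 1$ non-vanishing choices of $x_n$ at each good point of $B'$), and the exact bookkeeping $\deg P_e\le d-e$ so that the slacks add up to at most $d$.

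Two small points worth making explicit if you write this up. First, in the base case $n=1$ the raw count $|A_1|-d$ can be nonpositive when $d\ge|A_1|$; you need the minimum to be read as $\max(1,|A_1|-d)$, with the value $1$ supplied by the hypothesis that $P$ does not vanish identically on $B$ (or, equivalently, note that after your degree reduction the univariate degree is at most $|A_1|-1$, which makes the count automatically positive). Second, since you perform the degree reduction ``before the induction'' but then apply the inductive hypothesis to $P_e$, which need not be reduced, you should state the inductive claim for arbitrary polynomials of degree at most the given budget and perform the reduction as the first step \emph{inside} each level of the induction; this is purely organizational and does not affect correctness.
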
 

We now prove our main lower bound on the number of proper $L$-packings of a sparse $G$.

\begin{customlem}{\bf \ref{lem: together}}
Suppose $G$ is an $n$-vertex graph with $m$ edges.  Suppose $L$ is a $q$-assignment for $G$, and $k \in \N$ is such that $k \leq q$ and $P^\star(G,L,k) > 0$.  If $m \leq n(q-1-(k-1)/2)$, then 
$$P^{\star}(G,L,k) \geq \frac{1}{k!} q^{kn-\frac{nk(k-1)/2 + km}{q-1}}.$$
\end{customlem}

\begin{proof}
We will prove $P(G \mathbin{\square} K_k, L^{(k)}) \geq q^{kn-\frac{nk(k-1)/2 + km}{q-1}}$ which will imply the desired result by Lemma~\ref{lem: connectlist}. 
 Let $H = G \mathbin{\square} K_k$.  Suppose that $L$ is such that $L(v_i) \subset \R$ for each $i \in [n]$.  Now, suppose $f$ is the $kn$-variable polynomial with real coefficients and variables $x_{i,j}$ for each $(i,j) \in [n] \times [k]$ given by
$$f = \prod_{(v_q,w_r)(v_s,w_u) \in E(H),\; r < u \text{ or } q < s \text{ when }r=u} (x_{q,r}-x_{s,u}).$$
Clearly, $f$ is of degree $|E(H)| = nk(k-1)/2 + mk$.  For each $(i,j) \in [n] \times [k]$, let $A_{i,j} = L^{(k)}(v_i,w_j)$, and let $B = \prod_{(i,j) \in [n] \times [k]} A_{i,j}$.  By the formula for $f$, we have that for any proper $L^{(k)}$-coloring for $H$, $g$, inputting $g(v_i,w_j)$ for $x_{i,j}$ for each $(i,j) \in [n] \times [k]$ results in a nonzero output for $f$. 

Consequently, $P(G \mathbin{\square} K_k, L^{(k)})$ is the number of elements in $B$ for which $f$ has a nonzero value.  By Observation~\ref{obs: key}, $P^\star(G,L,k) > 0$ implies that $f$ does not vanish on all of $B$.  Finally, Theorem~\ref{thm: bound} yields the desired result. 
\end{proof}

It should be noted that Lemma~\ref{lem: together} remains true if the bound on $m$ is dropped.  This is because when the bound on $m$ is violated, Lemma~\ref{lem: together} yields $P^\star(G,L,k) \geq 1$.  We however include the bound on $m$ in the statement since Lemma~\ref{lem: together} can only be useful when this bound is satisfied.  

We are now ready to prove Corollary~\ref{cor: 2colorings}.  Recall that if $G$ is a planar graph with girth at least $g$, then $|E(G)| \leq g|V(G)|/(g-2)$. The following result was recently obtained.

\begin{thm} [\cite{CC23}] \label{thm: recent2}
Suppose that $G$ is a planar graph of girth at least 8.  Then, $P_{\ell}^\star(G,3,2)>0$.
\end{thm}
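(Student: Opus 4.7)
The plan is to argue by contradiction via a minimum counterexample combined with discharging, in the spirit of standard proofs for list-coloring on planar graphs of large girth. Suppose the theorem fails and let $G$ be a vertex-minimum (and then edge-minimum) planar graph of girth at least $8$ carrying a $3$-assignment $L$ with no proper $L$-packing of size $2$. By Observation~\ref{obs: key}, this is equivalent to $G \mathbin{\square} K_2$ admitting no proper $L^{(2)}$-coloring, so the task is to produce such a two-sheet coloring of $G \mathbin{\square} K_2$.

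The first step is to rule out the trivial reducible configurations. A leaf $v$ with neighbor $u$ is easily reducible: given any packing $\{f_1, f_2\}$ of $G - v$, one has $|L(v) \setminus \{f_1(u)\}| \geq 2$ choices for $f_1(v)$ and then at least $|L(v) \setminus \{f_2(u), f_1(v)\}| \geq 1$ choice for $f_2(v)$, contradicting minimality. Hence $\delta(G) \geq 2$. For a degree-$2$ vertex the naive count already fails at the second extension, so one must argue more carefully, deleting the vertex and exploiting the fact that the girth-$8$ assumption forces its two neighbors to be non-adjacent and to have essentially disjoint local neighborhoods, so that one can perform a controlled recoloring on $G - v$ before extending to $v$.

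Next I would identify a sufficiently rich family of reducible local configurations --- typically threads of consecutive degree-$2$ vertices, small clusters of degree-$3$ vertices, and analogous structures --- each handled by carefully choosing the extension at the relevant vertices with several case distinctions on the pattern of previously chosen colors. With this list of reducible configurations in hand, a discharging argument would complete the proof: planar graphs of girth $\geq 8$ satisfy $\mathrm{mad}(G) < 8/3$ via Euler's formula and the inequality $2|E(G)| \geq 8|F(G)|$, yielding a negative total initial charge under the assignment $\mathrm{ch}(v) = \deg(v) - 8/3$ and $\mathrm{ch}(f) = |f| - 8/3$. Suitable discharging rules that move charge from high-degree vertices toward low-degree threads would force one of the forbidden configurations to appear in $G$, giving the desired contradiction.

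The main obstacle is the reducibility step: packing reductions are strictly harder than list-coloring reductions, since at a vertex $v$ of degree $d$ one must simultaneously select two distinct colors from $L(v)$ avoiding two separate $d$-element sets of forbidden neighbor-colors, so the naive counting bound demands $|L(v)| - d \geq 2$ rather than $\geq 1$. Configurations that reduce easily for $3$-choosability at girth $5$ in the style of Thomassen may well fail for the size-$2$ packing problem and must be replaced by longer or richer local substructures, which is ultimately why the girth hypothesis is pushed all the way up to $8$. A conceivable alternative would be to pass to DP-coloring of the cover graph associated to $L^{(2)}$ on $G \mathbin{\square} K_2$, but the matching edges between the two sheets create $4$-cycles in the auxiliary graph that prevent a direct appeal to known planar DP-choosability theorems, so a customized configuration analysis seems unavoidable.
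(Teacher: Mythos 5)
This is one of the statements the paper does not prove at all: Theorem~\ref{thm: recent2} is imported verbatim from the cited work of Cambie, Cames van Batenburg, and Zhu, and the present paper only uses it as a black box to derive Corollary~\ref{cor: 2colorings}. So there is no in-paper proof to compare against; your proposal has to stand on its own as a proof of the cited result, and as written it does not.

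What you have is a roadmap, not a proof. The two steps that carry all of the mathematical content are both left unexecuted. First, reducibility: beyond the leaf case (which you do verify correctly --- with $|L(v)|=3$ and one neighbor, the counts $3-1\geq 2$ and $3-2\geq 1$ go through), you concede that a degree-$2$ vertex is \emph{not} naively reducible, and then defer to ``a sufficiently rich family of reducible local configurations --- typically threads of consecutive degree-$2$ vertices, small clusters of degree-$3$ vertices, and analogous structures'' without specifying a single such configuration or proving its reducibility. As you yourself observe, packing reductions are strictly harder than list-coloring reductions because the second coloring must additionally avoid the first; this is precisely where the proof lives, and it is absent. Second, discharging: your initial charge assignment and the inequality $\mathrm{mad}(G) < 8/3$ are fine, but without a concrete list of forbidden configurations there are no discharging rules to state, and hence no contradiction to derive. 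A correct instinct about the overall architecture (minimal counterexample, reducible configurations, discharging on a girth-$8$ planar graph) does not substitute for identifying the configurations and verifying, case by case, that each one can be colored in both sheets of the packing. Until that is done, the argument proves nothing.
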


It is shown in \cite{CC23} this result is best possible in the sense that $P_{\ell}^\star(G,3,3)=0$.  We can now combine Lemma~\ref{lem: together} and Theorem~\ref{thm: recent2} to show that there are exponentially many pairs of disjoint $L$-colorings for any 3-assignment $L$ of a planar graph of girth at least 8.

\begin{customcor} {\bf \ref{cor: 2colorings}}
Suppose $G$ is an $n$-vertex planar graph of girth at least 8.  Then,
$$P_{\ell}^\star(G,3,2) \geq \frac{3^{n/6}}{2}.$$
\end{customcor}

\begin{proof}
 Suppose $L$ is a 3-assignment for $G$ satisfying $P^\star(G,L,2)= P_{\ell}^\star(G,3,2)$. 
 Theorem~\ref{thm: recent2} implies $P^\star(G,L,2) > 0$.  Then, Lemma~\ref{lem: together} along with the fact that $|E(G)| \leq 4n/3$ yields
 $$P_{\ell}^\star(G,3,2) = P^\star(G,L,2) \geq \frac{3^{n/6}}{2}.$$
\end{proof}
\vspace*{0.5cm}
{\bf Acknowledgement.} The authors thank S. Cambie and W. Cames van Batenburg for their helpful comments.

\end{document}